\newtheorem{thm}{Theorem}[section]
\newtheorem{prop}[thm]{Proposition}
\newtheorem{lem}[thm]{Lemma}
\theoremstyle{definition}
\newtheorem{rem}[thm]{Remark}
\newtheorem{defn}[thm]{Definition}
\DeclareMathOperator{\dima}{dim_A}
\DeclareMathOperator{\diam}{diam}
\DeclareMathOperator{\dist}{dist}
      \newcommand{\N}{{\mathbb N}}
      \newcommand{\R}{{\mathbb R}}
\begin{document}

\title[Fractional Hardy--Sobolev inequalities]
{Fractional Hardy--Sobolev type inequalities\\ for half spaces and John domains}

\author[B. Dyda]{Bart{\l}omiej Dyda}
\address[B.D.]{Faculty of Pure and Applied Mathematics, Wroc{\l}aw University of Science and Technology, ul. Wybrze\.{z}e Wyspia\'{n}skiego 27, 50-370 Wroc{\l}aw, Poland}
\email{bartlomiej.dyda@pwr.edu.pl}
 \thanks{B.D. was partially supported by National Science Centre, Poland, grant no. 2015/18/E/ST1/00239}

\author[J. Lehrb\"ack]{Juha Lehrb\"ack}   
\address[J.L.]{Department of Mathematics and Statistics, P.O. Box 35, FI-40014 University of Jyvaskyla, Finland}
\email{juha.lehrback@jyu.fi}

\author[A. V. V\"ah\"akangas]{Antti V. V\"ah\"akangas}
\address[A.V.V.]{Department of Mathematics and Statistics, P.O. Box 35, FI-40014 University of Jyvaskyla, Finland}
 \email{antti.vahakangas@iki.fi}

\keywords{Fractional Hardy--Sobolev inequality, Hardy--Sobolev--Maz'ya inequality, John domain}
\subjclass[2010]{35A23 (26D10, 46E35)}

\begin{abstract}
As our main result we prove a variant of the fractional 
Hardy--Sobolev--Maz'ya inequality for half spaces.
This result contains a complete answer to
a recent open question by Musina and Nazarov.
In the proof we apply a new version of the 
fractional Hardy--Sobolev inequality that we establish
also for more general unbounded John domains than half spaces.
\end{abstract}

\maketitle

\markboth{\textsc{B. Dyda,  J. Lehrb\"ack and A. V. V\"ah\"akangas}}
{\textsc{Fractional Hardy--Sobolev type inequalities}}

\section{Introduction}
The main result in this note is the following  {\em fractional 
Hardy--Sobolev--Maz'ya inequality}  for functions $u\in C^\infty_0(\R^n_+)$,
where $\R^n_+$ is the upper half space of $\R^n$ with $n\ge 2$:
\begin{align}\label{eq:main_intro}
\iint_{\R^n_+\times\R^n_+} \frac{\lvert u(x)-u(y)\rvert^p}{\lvert x-y\rvert^{n+sp} } \,dy\,dx
 - \mathcal D \int_{\R^n_+} \lvert u(x)\rvert^p x_n^{-sp}\,dx 
\geq \sigma \, \biggl( \int_{\R^n_+} \lvert u(x)\rvert^q x_n^{-bq} \,dx \biggr)^{p/q}\,.
\end{align}
Here $2\leq p,q<\infty$ and $0<s<1$ are such that $sp<n$ and $p < q \leq np/(n-sp)$, and
$b=n(1/q-1/p)+s$; notice that then
\[
\frac{b}{n} = \frac{1}{q} - \frac{n-sp}{np}\quad\text{ and }\quad -bq=\frac q p(n-sp)-n\,.
\]
The constant $\sigma=\sigma(n,p,q,s)>0$ in~\eqref{eq:main_intro} is independent of $u$,
and $\mathcal D=\mathcal D(n,p,s)\ge 0$ is the optimal constant for which the left-hand side of~\eqref{eq:main_intro}
is non-negative for all $u\in C^\infty_0(\R^n_+)$; see~\eqref{eq:hardyconst} in Section~\ref{sec.application}
for an explicit expression of this constant.
By approximation, inequality~\eqref{eq:main_intro} holds for all functions in the 
associated fractional Sobolev space $\mathcal{W}_0^{s,p}(\R^n_+)$; cf.\ Theorem~\ref{thm:hsm}.

The validity of inequality~\eqref{eq:main_intro} completely solves the Open Problem~1 posed by 
Musina and Nazarov at the end of the paper~\cite{MusinaNazarov}, where actually only the
case $p=2$ was under consideration. Our results also extend the validity of~\cite[Theorem~3.1]{MusinaNazarov}
to the case $0<s<1/2$.  

When $sp=1$, the constant $\mathcal D$ equals zero, and then for 
$q=np/(n-sp)=np/(n-1)$ inequality~\eqref{eq:main_intro}
is the usual Sobolev inequality. For $sp\neq 1$ it holds that $\mathcal D>0$.
When $sp>1$, then in the special case $q = np/(n-sp)$
the validity of inequality~\eqref{eq:main_intro} 
was proved in \cite{Sloane} for $p=2$
and in \cite{MR2910984} for general $p\geq 2$; see also \cite{FMT} for similar results.
On the other hand, when $sp<1$, the validity of inequality~\eqref{eq:main_intro} seems 
to be completely new.

In the proof of inequality~\eqref{eq:main_intro}, we bring together in a novel way adaptations of some recent results related to fractional inequalities. 
We begin in Section~\ref{s.john} by
extending a fractional Riesz potential estimate from~\cite{H-SV} to the case of unbounded John domains,
including the upper half-space $\R^n_+$.
The definition and some important properties of John domains are recalled at the beginning
of that section. In Section~\ref{s.hardy-sobolev}, we establish the {\em weighted fractional Hardy--Sobolev inequality} 
\begin{equation}\label{e.weighted_intro}
\begin{split}
\int_D \int_{B(x,\tau \delta_{\partial D}(x))}  \frac{\lvert u(x)-u(y)\rvert^p}{\lvert x-y\rvert^{n+s p}}\,dy\,
\delta_{\partial D}^\beta(x)\,dx
\ge C
\bigg(\int_D \lvert u(x)\rvert^q
\delta_{\partial D}^{(q/p)(n-s p+\beta)-n}(x)\,dx\bigg)^{p/q}
\end{split}
\end{equation} 
for functions $u\in C^\infty_0(D)$, where $0<\tau<1$
and $D$ is an unbounded John domain satisfying the additional assumption that the Assouad dimension of the
boundary $\partial D$ is small enough; we use here the notation $\delta_{\partial D}(x)=\dist(x,\partial D)$. 
The proof of inequality~\eqref{e.weighted_intro} is based on the 
Riesz potential estimate from Section~\ref{s.john} and general two weight inequalities for Riesz potentials
from~\cite{Dyda0}. An important feature in inequality~\eqref{e.weighted_intro} is that,
due to the parameter $0<\tau<1$, the
inner integral in the left-hand side is taken over a ball
which is not too close to the boundary $\partial D$. This crucial fact
allows some flexibility to modify the
weight functions that are powers of the distance-to-boundary function  $\delta_{\partial D}$; cf.\ estimate~\eqref{e.shift}. 
The fractional Hardy--Sobolev--Maz'ya inequality~\eqref{eq:main_intro} is then proved in Section~\ref{sec.application}, 
relying on the Hardy--Sobolev inequality~\eqref{e.weighted_intro} and a sharp fractional
Hardy inequality with a reminder term from~\cite[Theorem~1.2]{FrankSeiringer}.
Section~\ref{sec.application} also contains discussion related to the
space $\mathcal{W}_0^{s,p}(\R^n_+)$ and the approximation argument that
allows us to extend the validity of inequality~\eqref{eq:main_intro} for the functions
belonging to this space.

\subsection*{Notation}\label{s.notation}

Throughout this note, we work in the $n$-dimensional Euclidean space $\R^n$, with $n\ge 2$.
We write $\R^n_+=\R^{n-1}\times (0,\infty)$, 
and denote by $C^\infty_0(\R^n_+)$ the space of smooth functions 
whose support is a compact set in $\R^n_+$.
The open ball centered at $x\in \R^n$ and with radius $r>0$ is denoted $B(x,r)$.
When $E\neq\emptyset$ is a set in $\R^n$, 
the Euclidean distance from $x\in\R^n$ to $E$ is written as $\dist(x,E)=\delta_E(x)$,
the diameter of $E$ is $\diam(E)$, and
we write $\chi_E$ for the characteristic function of $E$; that is, 
$\chi_E(x)=1$ if $x\in E$ and $\chi_E(x)=0$ if $x\notin E$.
In addition, $\overline{E}$ denotes the closure of $E$. 
The Lebesgue $n$-measure of a measurable set $E\subset \R^n$ is denoted by $\vert E\vert$,
and if $0<\vert E\vert<\infty$ and $u$ is an integrable function on $E$, we use the notation
\[
u_E=\frac{1}{\lvert E \rvert} \int_{E}u(y)\,dy\,. 
\]

The letter $C$ is used to denote positive constants whose values are
not necessarily the same at each occurrence.
We also write $C=C(\ast,\dotsb,\ast)$ to indicate that the constant $C$
depends (at most) on the quantities appearing
in the parentheses.

\section{A fractional potential estimate on John domains}\label{s.john}

In this section we prove Theorem \ref{t.repr}, which provides a fractional
potential estimate for unbounded John domains;
recall that a domain is an open and connected set.
Following \cite{MR1246886}, we will first define John domains 
in such a way that unbounded domains are allowed. 
Several equivalent definitions for John domains can be found in \cite{MR1246886}.
 When $D\subset\R^n$ is a domain and $x_1,x_2\in D$, we say that a curve $\gamma\colon [0,\ell]\to D$
joins $x_1$ to $x_2$ if $\gamma(0)=x_1$ and $\gamma(\ell)=x_2$.

\begin{defn}\label{sjohn}
A domain $D\subsetneq\R^n$, with $n\ge 2$, is a {\em $c$-John domain}, for $c\ge 1$, if
each pair of points $x_1,x_2\in D$ can be joined
by a rectifiable arc length parametrized curve $\gamma\colon [0,\ell]\to D$ satisfying
$\dist(\gamma(t),\partial D)\ge \min\{t,\ell -t\}/c$ for every $t\in [0,\ell]$.
\end{defn}

It is clear that for example the 
half-space $\R^n_+=\R^{n-1}\times (0,\infty)$ is an unbounded John domain,
but it is also easy to come up with more irregular
examples, since the class of John domains is quite flexible.
For instance, the unbounded domain whose boundary is the usual von Koch snowflake
curve is an unbounded John domain in $\R^2$.

The next lemma recalls a useful property 
which can actually be used to characterize bounded John domains.
See \cite[Theorem 3.6]{MR1246886} for more details and a proof of this result.

\begin{lem}\label{t.equi}
Assume that $D\subset\R^n$ is a bounded $c_1$-John domain, $n\ge 2$. 
 Then there is a point $x_0\in D$ such that each
 $x\in D$ can be joined to $x_0$ by a rectifiable arc length parametrized curve
 $\gamma\colon[0,\ell]\to D$ satisfying 
$\dist(\gamma(t),\partial D)\ge t/(4c_1^2)$ for every $t\in [0,\ell]$. 
\end{lem}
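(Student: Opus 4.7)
The plan is to take $x_0\in D$ as a deepest point of $D$ and verify that, for every $x\in D$, the bilateral $c_1$-John curve from $x$ to $x_0$ given by Definition~\ref{sjohn} already satisfies the one-sided condition with constant $4c_1^2$. Since $D$ is bounded and nonempty, the continuous function $x\mapsto \dist(x,\partial D)$ attains its maximum $R>0$ on the compact set $\overline D$; since this function vanishes on $\partial D$, the maximum is attained at an interior point, which we take as $x_0$. A key preliminary is a universal length bound: if $\gamma\colon[0,\ell]\to D$ is any $c_1$-John curve, applying Definition~\ref{sjohn} at the midpoint yields $\dist(\gamma(\ell/2),\partial D)\ge \ell/(2c_1)$, and since this quantity is dominated by $R$ we obtain $\ell\le 2c_1 R$.

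Now fix $x\in D$ and let $\gamma\colon [0,\ell]\to D$ be the $c_1$-John curve from $x$ to $x_0$. For $t\in[0,\ell/2]$ the desired estimate is immediate, since $\dist(\gamma(t),\partial D)\ge t/c_1\ge t/(4c_1^2)$ because $c_1\ge 1$. The main obstacle is the second half $t\in[\ell/2,\ell]$, where the bilateral John bound $(\ell-t)/c_1$ decays to zero while we still need a positive lower bound proportional to $\ell/c_1^2$. The idea is to couple this decaying bound with an \emph{increasing} bound supplied by the depth of $x_0$: writing $u=\ell-t$, the arc length parametrization gives $|\gamma(t)-x_0|\le u$, whence
\[
\dist(\gamma(t),\partial D)\ge R-u
\]
by the reverse triangle inequality. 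Maximizing over the two available bounds at the equalizing value $u=c_1 R/(c_1+1)$, with the case $u>R$ handled directly by the John bound $u/c_1\ge R/c_1\ge R/(c_1+1)$, produces the uniform lower estimate $\dist(\gamma(t),\partial D)\ge R/(c_1+1)$ on all of $[\ell/2,\ell]$.

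It remains only to combine the bounds: since $t\le\ell\le 2c_1 R$ and $c_1\ge 1$, one has $t/(4c_1^2)\le R/(2c_1)\le R/(c_1+1)$, matching the lower bound above and closing the argument. Apart from the two-bound interpolation on the second half, no step requires anything beyond elementary algebra; the crucial ingredients are the choice of $x_0$ as a deepest point of $D$ and the a priori length estimate $\ell\le 2c_1 R$ obtained from the midpoint trick.
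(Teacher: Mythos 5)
Your argument is correct. Note that the paper does not prove this lemma at all: it is quoted from V\"ais\"al\"a \cite{MR1246886} (Theorem 3.6 there), so you have supplied a self-contained proof of a cited fact. Your proof is, in essence, the standard argument behind that characterization: choose the John center $x_0$ as a point of maximal depth $R=\dist(x_0,\partial D)$, derive the a priori length bound $\ell\le 2c_1R$ from the two-sided John condition at the midpoint, use the bound $\min\{t,\ell-t\}/c_1$ directly on the first half of the curve, and on the second half play the decaying John bound $(\ell-t)/c_1$ against the $1$-Lipschitz estimate $\dist(\gamma(t),\partial D)\ge R-(\ell-t)$ coming from the arc length parametrization. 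All the numerical steps check out: the two bounds equalize at $u=c_1R/(c_1+1)$, giving the uniform lower bound $R/(c_1+1)$ on $[\ell/2,\ell]$, and since $t/(4c_1^2)\le \ell/(4c_1^2)\le R/(2c_1)\le R/(c_1+1)$ for $c_1\ge 1$, the claimed constant $4c_1^2$ follows. The only thing your write-up gains over the citation is transparency of the constant; the only thing it loses is that V\"ais\"al\"a's paper also establishes the converse direction and several equivalent formulations, which your argument does not address (and need not, for the lemma as stated).
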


The point $x_0$ appearing in Lemma \ref{t.equi} is called a {\em John center of $D$}.
The following engulfing property of John domains can be found in \cite[Theorem 4.6]{MR1246886}.

\begin{lem}\label{t.engulfing}
A $c$-John domain $D\subsetneq\R^n$ can be written as the union
of $c_1$-John domains $D_1,D_2,\ldots$, where $c_1=c_1(c,n)$
and $\overline{D_i}$ is compact in $D_{i+1}$ 
for each $i=1,2,\ldots$.
\end{lem}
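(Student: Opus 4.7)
The plan is to build the exhausting sub-domains as unions of thickened John curves---``John tubes''---from a fixed base point $x_0\in D$. For every $x\in D$, Definition~\ref{sjohn} supplies an arc-length parametrized curve $\gamma_x\colon[0,\ell_x]\to D$ from $x_0$ to $x$ with $\delta_{\partial D}(\gamma_x(t))\ge\min\{t,\ell_x-t\}/c$. Fixing a parameter $r\in(0,1)$, I form the open, connected tube
\[
T_x \;=\; \bigcup_{t\in[0,\ell_x]}B\bigl(\gamma_x(t),r\,\delta_{\partial D}(\gamma_x(t))\bigr)\subset D,
\]
choose strictly monotone sequences $R_i\nearrow\infty$ and $\varepsilon_i\searrow 0$, and set $F_i=\{x\in D:\ell_x\le R_i,\ \delta_{\partial D}(x)\ge\varepsilon_i\}$ and $D_i=\bigcup_{x\in F_i}T_x$.

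Each $D_i$ is an open, connected, bounded subset of $D$, and $\bigcup_i D_i=D$, since any fixed $x\in D$ satisfies $x\in F_i\subset D_i$ for all large~$i$. A standard sequential argument shows that a sequence of points in $D_i$ approaching $\partial D$ must come from tubes $T_{x_k}$ whose curve parameters $t_k$ accumulate at an endpoint of $[0,\ell_{x_k}]$, forcing a limit equal to $x_0$ or lying in $F_i$; consequently $\overline{D_i}\cap\partial D=\emptyset$ and, with sufficient slack between $(R_i,\varepsilon_i^{-1})$ and $(R_{i+1},\varepsilon_{i+1}^{-1})$, the compact inclusion $\overline{D_i}\Subset D_{i+1}$ holds.

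The principal step is verifying that $D_i$ is a $c_1$-John domain with $c_1=c_1(c,n)$. Following the spirit of Lemma~\ref{t.equi}, I take $x_0$ as a John center: given $y\in D_i$, choose an anchor $x\in F_i$ and a parameter $t_0$ with $y\in B(\gamma_x(t_0),r\,\delta_{\partial D}(\gamma_x(t_0)))$, and join $y$ to $x_0$ by concatenating the segment $[y,\gamma_x(t_0)]$ with $\gamma_x|_{[0,t_0]}$ reversed. Along $\gamma_x$ the tube inclusion $B(\gamma_x(t),r\,\delta_{\partial D}(\gamma_x(t)))\subset D_i$ yields $\delta_{\partial D_i}(\gamma_x(t))\ge r\,\delta_{\partial D}(\gamma_x(t))\ge rt/c$, while along the initial segment a ball of the same radius provides a linear lower bound on $\delta_{\partial D_i}$ in terms of arc-length back to $y$. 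Concatenating two such arcs at $x_0$ upgrades this John-center property to the pair-of-points John estimate of Definition~\ref{sjohn} with a constant $c_1$ depending only on $c$ and $n$.

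The main obstacle is maintaining the constant $c_1=c_1(c,n)$ uniformly in $i$. This rests on the tube-based comparison $\delta_{\partial D_i}(\gamma_x(t))\gtrsim r\,\delta_{\partial D}(\gamma_x(t))$ along each central curve---valid regardless of how tubes from different anchors overlap---together with the routine argument that passing curves through $x_0$ converts the John-center property into the full pair-of-points definition at the price of a constant depending only on $c$ and $n$. The bookkeeping of the parameters $(r,R_i,\varepsilon_i)$ to guarantee simultaneously the exhaustion and the engulfing properties is technical but does not affect the quantitative John constant.
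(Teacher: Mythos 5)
The paper itself does not prove this lemma; it is quoted verbatim from V\"ais\"al\"a \cite[Theorem~4.6]{MR1246886}, so your argument has to stand on its own. The exhaustion set-up (tubes around John curves, parameters $R_i\nearrow\infty$, $\varepsilon_i\searrow 0$) is a reasonable starting point, and the topological assertions can be repaired with care, but the quantitative heart of your proof --- that each $D_i$ is a $c_1(c,n)$-John domain with $x_0$ as John center --- is false. A John center of a bounded $c_1$-John domain is necessarily \emph{deep}: evaluating the condition $\dist(\gamma(t),\partial D_i)\ge t/(4c_1^2)$ of Lemma~\ref{t.equi} at the endpoint $t=\ell$ gives $\delta_{\partial D_i}(x_0)\ge \ell/(4c_1^2)$, hence $\delta_{\partial D_i}(x_0)\gtrsim \diam(D_i)/c_1^2$. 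In your construction $\delta_{\partial D_i}(x_0)\le\delta_{\partial D}(x_0)$ is a fixed finite number while $\diam(D_i)\to\infty$, so no uniform $c_1$ can work with the center fixed at $x_0$. The same failure shows up directly in the pair-of-points condition of Definition~\ref{sjohn}: for $y_1,y_2\in D_i$ with $\delta_{\partial D}(y_j)\asymp 1$ and $|y_j-x_0|=R$ large, your concatenated curve through $x_0$ has $\min\{t,\ell-t\}\asymp R$ at the junction, while $\delta_{\partial D_i}(x_0)\le\delta_{\partial D}(x_0)$, forcing $c_1\gtrsim R$. The slip is visible in your key estimate: along $\gamma_x$ you get $\delta_{\partial D_i}(\gamma_x(t))\ge r\,\delta_{\partial D}(\gamma_x(t))\ge r\min\{t,\ell_x-t\}/c$ (not $rt/c$; Definition~\ref{sjohn} is symmetric), with $t$ measured \emph{from $x_0$}; rewritten in the arc length $\sigma$ from $y$ this bound \emph{decays} as the curve approaches $x_0$, whereas the John-center property requires $\delta\ge\sigma/c_1$, i.e.\ linear growth toward the center.

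The fix cannot be cosmetic: the centers must recede into the domain, i.e.\ one needs points $x^{(i)}$ with $\delta_{\partial D}(x^{(i)})\gtrsim\diam(D_i)$ (such points exist, since the midpoint of a John curve joining points at mutual distance $R$ has $\delta_{\partial D}\ge R/(2c)$), and the curves generating $D_i$ must be re-anchored at $x^{(i)}$. This is not a side issue for the present paper: the proof of Theorem~\ref{t.repr} relies on $\delta_{\partial D_i}(x_i)\ge(12c_1^2)^{-1}\diam(D_i)\to\infty$ to conclude $\lvert B_i\rvert\to\infty$ and $u_{B_i}\to 0$, which would also fail if all the $B_i$ were centered at a fixed $x_0$. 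Two smaller gaps in the engulfing step: $\overline{D_i}\subset D_{i+1}$ needs a uniform upper bound on the chosen lengths $\ell_y$ for $y\in\overline{D_i}$ (not automatic, since the John condition does not bound $\ell_y$ in terms of $\lvert y-x_0\rvert$ and your curves are chosen arbitrarily), and a point of $\overline{D_i}$ may sit on the boundary sphere of every tube ball containing its approximants, so the tube radius must be enlarged from step $i$ to step $i+1$. Carrying all of this out with a constant depending only on $c$ and $n$ is precisely the content of V\"ais\"al\"a's proof, which is substantially more delicate than the sketch you give.
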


We now turn to the potential estimate, which is given in terms of Riesz potentials.
Recall that the \emph{Riesz potential} $\mathcal{I}_\alpha(f)$ of a measurable function $f\colon \R^n\to [0,\infty]$, 
for $0<\alpha<n$, is defined as
\[
\mathcal{I}_\alpha(f)(x) = \int_{\R^n} \frac{f(y)}{\lvert x-y\rvert^{n-\alpha}} \,dy\,,\qquad x\in \mathbb{R}^n\,.
\]

\begin{thm}\label{t.repr}
Assume that $D\subsetneq \R^n$ is an unbounded $c$-John domain,
and let $0<\tau,s<1$ and $1\le p<\infty$.
Then there is a constant $C=C(\tau,n,c,s,p)>0$ such that 
the inequality
\begin{align*}
\vert u(x)\vert &\le C
\int_{D}\frac{g(y)}{\vert x-y\vert ^{n-s}}\,dy =C\,\mathcal{I}_{s}(\chi_D g)(x)
\end{align*}
holds whenever  $u\in \bigcup_{1\le r<\infty}L^r(D)$ and
$x\in D$ is Lebesgue point of $u$, where we have denoted
\begin{equation}\label{e.finite}
g(y):=\biggl(\int_{B(y,\tau\delta_{\partial D}(y))}\frac{\vert u(y)-u(z)\vert^p}{\vert y-z\vert ^{n+s p}}\,dz\biggr)^{1/p}
\end{equation}
for every $y\in D$.
\end{thm}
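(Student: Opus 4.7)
The plan is to reduce to the bounded John case via Lemma \ref{t.engulfing} and then run a Whitney-chain telescoping argument along a John curve, in the spirit of \cite{H-SV}.

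\textbf{Bounded case.} First I would establish the analogous inequality for bounded $c_1$-John subdomains $\Omega\subset D$: for any Lebesgue point $x\in\Omega$,
\begin{equation*}
\lvert u(x)-u_{B_\Omega}\rvert\le C\int_D\frac{g(y)}{\lvert x-y\rvert^{n-s}}\,dy,
\end{equation*}
where $B_\Omega=B(x_0,\delta_{\partial\Omega}(x_0)/2)$, $x_0$ is a John center of $\Omega$ supplied by Lemma \ref{t.equi}, and $C=C(\tau,n,c_1,s,p)$. To obtain this, take the John curve $\gamma\colon[0,\ell]\to\Omega$ from $x$ to $x_0$ and build a Whitney-type chain of balls $B_j=B(x_j,r_j)$ with $x_j$ on $\gamma$ and $r_j=\sigma\,\delta_{\partial\Omega}(x_j)$, where $\sigma=\sigma(\tau)\in(0,1)$ is chosen to satisfy $\sigma\le\tau/(2+\tau)$. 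For $y\in B_j$ this gives
\begin{equation*}
\tau\,\delta_{\partial D}(y)\ge\tau\,\delta_{\partial\Omega}(y)\ge\tau(1-\sigma)\delta_{\partial\Omega}(x_j)=\frac{\tau(1-\sigma)}{\sigma}\,r_j\ge 2r_j,
\end{equation*}
so $B_j\subset B(y,\tau\,\delta_{\partial D}(y))$. The chain is arranged (by standard John-chain constructions) so that consecutive balls have comparable radii and meet in a ball of comparable size, $\lvert x-y\rvert\sim r_j$ for $y\in B_j$, and $\sum_j\chi_{B_j}$ is bounded.

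The fractional $(p,p)$-Poincar\'e inequality on $B_j$ combined with the above inclusion yields
\begin{equation*}
\lvert u_{B_j}-u_{B_{j+1}}\rvert\le C\,r_j^{s}\biggl(\frac{1}{\lvert B_j\rvert}\int_{B_j}g(y)^p\,dy\biggr)^{1/p}\le C\,\frac{r_j^{s}}{\lvert B_j\rvert}\int_{B_j}g(y)\,dy,
\end{equation*}
the last step by H\"older. Telescoping $u(x)-u_{B_\Omega}=\sum_j(u_{B_j}-u_{B_{j+1}})$ (using that $x$ is a Lebesgue point) and summing with $r_j^{s}/\lvert B_j\rvert\sim\lvert x-y\rvert^{s-n}$ for $y\in B_j$ and the bounded overlap gives the claimed Riesz potential bound.

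\textbf{Unbounded case.} By Lemma \ref{t.engulfing}, $D=\bigcup_i D_i$ with each $D_i$ a bounded $c_1$-John domain, $c_1=c_1(c,n)$, and $\overline{D_i}$ compact in $D_{i+1}$. Since $D$ is unbounded and $\bigcup_i D_i=D$, we have $\diam(D_i)\to\infty$. Applying Lemma \ref{t.equi} to a point of $D_i$ at distance at least $\tfrac{1}{2}\diam(D_i)$ from the John center $x_0^i$ shows that $\delta_{\partial D_i}(x_0^i)\ge\diam(D_i)/(8c_1^2)\to\infty$. Hence the balls $B_i:=B(x_0^i,\delta_{\partial D_i}(x_0^i)/2)\subset D$ have $\lvert B_i\rvert\to\infty$. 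For every $i$ large enough that $x\in D_i$, the bounded case applied to $\Omega=D_i$ yields
\begin{equation*}
\lvert u(x)-u_{B_i}\rvert\le C\int_D\frac{g(y)}{\lvert x-y\rvert^{n-s}}\,dy,
\end{equation*}
with $C$ independent of $i$ (by the uniform $c_1$). Since $u\in L^r(D)$ for some $r\in[1,\infty)$, H\"older's inequality gives $\lvert u_{B_i}\rvert\le\lvert B_i\rvert^{-1/r}\lVert u\rVert_{L^r(D)}\to 0$. Letting $i\to\infty$ yields the desired inequality.

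\textbf{Main obstacle.} The delicate point is the geometric chain construction: one must pick the Whitney parameter $\sigma$ small enough in terms of $\tau$ so that each chain ball lies inside the inner ball $B(y,\tau\,\delta_{\partial D}(y))$ that defines $g$, while still keeping the chain comparable and the overlap bounded along the John curve. The exhaustion step is then routine, relying on the growth of the inradius of the $D_i$ to kill the mean $u_{B_i}$ via the $L^r$ assumption.
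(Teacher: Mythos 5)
Your overall architecture is the same as the paper's: the unbounded case is reduced to a potential estimate on bounded John domains (the paper's Proposition~\ref{t.bounded_repr}, quoted from \cite{H-SV}) via the exhaustion of Lemma~\ref{t.engulfing}, the growth of the inradii of the $D_i$, and $u_{B_i}\to 0$ from the $L^r$ assumption; that part of your argument is correct, including the monotonicity $\delta_{\partial D_i}\le\delta_{\partial D}$, which is what lets you keep $g$ defined relative to $D$ throughout. The difference is that you re-derive the bounded case by a chain argument instead of citing it, and one step of that derivation is false as written. From the fractional $(p,p)$-Poincar\'e inequality you get
\[
\lvert u_{B_j}-u_{B_{j+1}}\rvert\le C\,r_j^{s}\biggl(\frac{1}{\lvert B_j\rvert}\int_{B_j}g(y)^p\,dy\biggr)^{1/p},
\]
and you then claim this is $\le C\,r_j^{s}\,\lvert B_j\rvert^{-1}\int_{B_j}g$ ``by H\"older''. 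For $p>1$ H\"older (or Jensen) gives exactly the opposite inequality: the normalized $L^p$ average dominates the normalized $L^1$ average, and your claimed direction already fails for $g=\chi_E$ with $\lvert E\rvert$ a small fraction of $\lvert B_j\rvert$. The conversion is not cosmetic: the summation over the chain needs the $L^1$ average of $g$, so that $r_j^{s}/\lvert B_j\rvert\sim\lvert x-y\rvert^{s-n}$ can be absorbed into the Riesz kernel and bounded overlap finishes the job; the $L^p$ averages do not sum to $\mathcal{I}_{s}(\chi_D g)(x)$.

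The repair is standard and is what \cite{H-SV} does: bypass the $(p,p)$-Poincar\'e inequality and estimate
\[
\frac{1}{\lvert B_j\rvert}\int_{B_j}\lvert u-u_{B_j}\rvert
\le\frac{1}{\lvert B_j\rvert^2}\int_{B_j}\int_{B_j}\lvert u(y)-u(z)\rvert\,dz\,dy
\le\frac{(2r_j)^{(n+sp)/p}}{\lvert B_j\rvert^2}\int_{B_j}\int_{B_j}\frac{\lvert u(y)-u(z)\rvert}{\lvert y-z\rvert^{(n+sp)/p}}\,dz\,dy\,,
\]
then apply H\"older only in the inner variable $z$; using your inclusion $B_j\subset B(y,\tau\delta_{\partial D}(y))$ (guaranteed by $\sigma\le\tau/(2+\tau)$) to bound the inner integral by $\lvert B_j\rvert^{1-1/p}g(y)$, this gives $C\,r_j^{s}\,\lvert B_j\rvert^{-1}\int_{B_j}g(y)\,dy$ directly. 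With that substitution your proof goes through and coincides in substance with the paper's.
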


For the proof of Theorem \ref{t.repr}, we first need a
a fractional potential estimate for bounded John domains, which 
is stated in Proposition \ref{t.bounded_repr} below.
For a simple proof of Proposition \ref{t.bounded_repr}, we refer to
the proof of \cite[Theorem 4.10]{H-SV};
see formula (4.13) therein. 
For our purposes, we actually need to track the constants a bit
more carefully than what is done in \cite{H-SV},
but an inspection of the proof in \cite{H-SV} 
shows that the constants depend on the $c_1$-John domain $D$
only through $c_1$; we omit further details.
We also remark that while the statement of \cite[Theorem 4.10]{H-SV} contains the assumption 
$sp<n$, this is not needed for \cite[formula (4.13)]{H-SV} to hold.

\begin{prop}\label{t.bounded_repr}
Assume that $D\subset \R^n$ is a bounded $c_1$-John domain,
and let $0<\tau,s<1$ and $1\le p<\infty$. 
Let $x_0\in D$ be a John center of $D$ as in Lemma \ref{t.equi}, 
let $M>2/\tau$, and denote 
\[
B=B\biggl(x_0,\frac{\delta_{\partial D}(x_0)}{16Mc^2_1}\biggr)\,.
\]
Then there is a constant $C=C(M,n,c_1,s,p)>0$ such that
\begin{align*}
\vert u(x)-u_{B}\vert &\le C
\int_{D}\frac{g(y)}{\vert x-y\vert ^{n-s}}\,dy
\end{align*}
whenever $u\in L^1_{\textup{loc}}(D)$, $x\in D$ is a
Lebesgue point of $u$, and $g$ is as in~\eqref{e.finite}
with respect to the bounded $c_1$-John domain $D$.
\end{prop}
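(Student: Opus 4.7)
The natural approach is a Whitney-style chaining argument along a John curve, combined with a ball fractional Poincar\'e estimate. By Lemma~\ref{t.equi}, fix a John center $x_0\in D$ and, for the given Lebesgue point $x\in D$, a rectifiable arc length parametrized curve $\gamma\colon[0,\ell]\to D$ from $x$ to $x_0$ satisfying $\delta_{\partial D}(\gamma(t))\ge t/(4c_1^2)$. Along $\gamma$ I would construct a chain of balls $B_i=B(\gamma(t_i),r_i)$ with $t_0=0$, $r_i:=\delta_{\partial D}(\gamma(t_i))/(8Mc_1^2)$ and $t_{i+1}:=t_i+r_i$, stopping at the first index $N$ for which $B\subset B_N$. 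Since $\delta_{\partial D}$ is $1$-Lipschitz and $r_i\ll\delta_{\partial D}(\gamma(t_i))$, consecutive radii and centers are comparable, the chain has bounded overlap, and $|x-y|\lesssim r_i$ for every $y\in B_i$ (once $i$ is large enough that $t_i\gtrsim\delta_{\partial D}(x)$).

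The key decomposition is the telescoping identity
\[
u(x)-u_B=(u(x)-u_{B_0})+\sum_{i=0}^{N-1}(u_{B_i}-u_{B_{i+1}})+(u_{B_N}-u_B),
\]
where the first term is further expanded over dyadic balls $B(x,2^{-k}\delta_{\partial D}(x))$ shrinking to $x$, which is legitimate since $x$ is a Lebesgue point. Each consecutive average difference is controlled through a ball fractional Poincar\'e inequality: the hypothesis $M>2/\tau$ is precisely what ensures the inclusion $B_i\subset B(y,\tau\delta_{\partial D}(y))$ for every $y\in B_i$, so that the inner integration set in the definition~\eqref{e.finite} of $g$ dominates the relevant local Dirichlet energy. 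Combining the $L^p$-ball Poincar\'e estimate with H\"older's inequality applied to the inner $z$-integral gives the $L^1$-style bound
\[
|u_{B_i}-u_{B_{i+1}}|\le C\,r_i^s\barint_{B_i}g(y)\,dy,
\]
and the same reasoning handles the initial dyadic telescope near $x$ and the final transition from $B_N$ to $B$.

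Finally, since $|x-y|\lesssim r_i$ for $y\in B_i$, we have
\[
r_i^s\barint_{B_i}g\,dy\le C\int_{B_i}\frac{g(y)}{|x-y|^{n-s}}\,dy,
\]
and bounded overlap of the $B_i$ together with the analogous summation over the dyadic shells around $x$ yields
\[
|u(x)-u_B|\le C\int_D\frac{g(y)}{|x-y|^{n-s}}\,dy,
\]
which is the claimed pointwise inequality. The main technical obstacle is verifying the geometric properties of the Whitney chain---bounded overlap, the comparability $|x-\gamma(t_i)|\asymp r_i$ away from the dyadic phase near $x$, and termination at a ball containing $B$---all of which rely on the John condition of Lemma~\ref{t.equi} and the Lipschitz property of $\delta_{\partial D}$. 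These verifications, with the constants depending on $D$ only through $c_1$, are precisely those carried out in~\cite[proof of Theorem~4.10, formula~(4.13)]{H-SV}.
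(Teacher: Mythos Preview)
Your proposal is correct and follows precisely the approach the paper itself invokes: the paper gives no self-contained proof of this proposition but simply refers to \cite[Theorem~4.10, formula~(4.13)]{H-SV}, noting that the constants there depend on $D$ only through $c_1$. Your sketch is exactly the Whitney-chain-along-a-John-curve argument carried out in that reference, and you cite the same source at the end; in fact you supply more detail than the paper does.
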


We are now ready for the proof of Theorem \ref{t.repr}.

\begin{proof}[Proof of Theorem \ref{t.repr}]
Assume that $u\in L^r(D)$ for some $1\le r<\infty$ and choose $M=3/\tau$.
By Lemma~\ref{t.engulfing}, there are bounded $c_1$-John domains $D_i$ with $c_1=c_1(c,n)\ge 1$ such that
\begin{equation*}
D_i\subset \overline{D_i}\subset D_{i+1}\,,\quad \text{ for all }i=1,2,\dots\,,
\end{equation*}
and
$D=\bigcup_{i=1}^{\infty}D_i$. 
Let $x_i\in D_i$ be a John center of $D_i$ given by Lemma \ref{t.equi}, and write
\[
B_i:=B\biggl(x_i,\frac{\delta_{\partial D_i}(x_i)}{16Mc^2_1}\biggr)\subset D_i\subset D\,.
\]
By Lemma \ref{t.equi} we have $\delta_{\partial D_i}(x_i) \ge (12c_1^2)^{-1}\diam(D_i)$.   
Observe that the numbers $\diam(D_i)$ converge to $\infty$ as $i\to \infty$, and thus 
$\lim_{i\to\infty}\lvert B_i\rvert  = \infty$. In particular, 
by H\"older's inequality, 
\[
\lvert u_{B_i} \rvert \le \frac{1}{\lvert B_i\rvert}\int_{B_i}\lvert u(x)\rvert\,dx\le \frac{\lVert u\rVert_{L^r(D)}}{\lvert B_i\rvert^{1/r}}
\xrightarrow{i\to\infty}0\,.
\]
Let us denote by $g_i$ the function defined as in \eqref{e.finite},
but with respect to the bounded $c_1$-John domain $D_i$, 
and let $x\in D$  be a Lebesgue point of $u$. Since $x\in D_i$  
for all sufficiently large indices $i$  
and $u\rvert_{ D_i} \in  L^1_{\textup{loc}}(D_i)$, 
we find, by an application of Proposition \ref{t.bounded_repr} 
and monotone convergence, 
that
\begin{align*}
\lvert u(x)\rvert &=\lim_{i\to\infty} \lvert u(x)-u_{B_i}\rvert 
\le C(M,n,c_1,s,p) \limsup_{i\to\infty} \int_{D_i}\frac{g_i(y)}{\lvert x-y\rvert^{n-s}}\,dy\\
&\le C(M,n,c_1,s,p) \int_D \frac{g(y)}{\lvert x-y\rvert^{n-s}}\,dy\,.
\end{align*}
This concludes the proof of the theorem.
\end{proof}

\section{Weighted fractional Hardy--Sobolev inequalities}\label{s.hardy-sobolev}

In this section we establish  
weighted fractional inequalities of the general form
\begin{equation}\label{e.weighted}
\begin{split}
\bigg(\int_D \lvert u(x)\rvert^q
\delta_{\partial D}^{(q/p)(n-s p+\beta)-n}(x)\,dx\bigg)^{p/q}
\le C\int_D \int_{B(x,\tau \delta_{\partial D}(x))}  \frac{\lvert u(x)-u(y)\rvert^p}{\lvert x-y\rvert^{n+s p}}\,dy\,
\delta_{\partial D}^\beta(x)\,dx\,,
\end{split}
\end{equation} 
where $u\in L^r(D)$ for some $1\le r<\infty$ and $D\subsetneq\R^n$ is an unbounded John domain satisfying the dimensional condition~\eqref{e.dim_assumption} below. Recall that
we write $\delta_{\partial D}(x)=\dist(x,\partial D)$. 
As was already mentioned in the Introduction, an
important feature here is that we obtain inequality~\eqref{e.weighted} with 
a parameter $0<\tau<1$. This allows us to use in applications of~\eqref{e.weighted} estimates of the type
\begin{equation}\label{e.shift}
\delta_{\partial D}^\beta(x)\le C \delta_{\partial D}^{\beta_1}(y)\delta_{\partial D}^{\beta_2}(x)
\end{equation}
for $x\in D$ and $y\in B(x,\tau \delta_{\partial D}(x))$, where and $\beta_1+\beta_2=\beta$
and $C=C(\tau,\beta_1,\beta_2)$.

When $E\subset \R^n$, the \emph{Assouad dimension} denoted 
$\dima(E)$ is the infimum of exponents $\alpha\ge 0$ for which 
there is a constant $C \ge 1$ such that for each $x\in E$ and every $0<r<R$, the set $E\cap B(x,R)$ 
can be covered by at most $C(r/R)^{-\alpha}$ balls of radius $r$. 
For example, the Assouad dimension of the boudary of the half-space $\R^n_+$ is $\dima(\partial\R^n_+)=n-1$,
and more generally, if $E\subset \R^n$ is an $m$-dimensional subspace, then $\dima(E)=m$.
See e.g.~\cite{Luukkainen} for more details, properties, and examples 
related to the Assouad dimension.

The following Theorem \ref{t.hardy} is 
a partial generalization of \cite[Theorem 1]{Dyda1},
where a weighted fractional Hardy-type inequality (the case $q=p$) is addressed,
and it extends \cite[Theorem 5.2]{H-SV2}, where a fractional Sobolev inequality 
is obtained in the case when $\beta=0$ and $q=np/(n-s p)$.
Theorem \ref{t.hardy} is an improved version of the recent metric
space result \cite[Theorem 5.3]{Dyda0}, where 
all the integrals were taken over the whole space.

\begin{thm}\label{t.hardy}
Assume that $D\subsetneq \R^n$ is an unbounded $c$-John domain and that $0<s<1$, 
\[
1<p\le q\le \frac{np}{n-s p}<\infty\,,
\]
and $\beta\in\R$ are such that
\begin{equation}\label{e.dim_assumption}
\dima(\partial D) < \min \biggl\{ \frac{q}{p}(n-s p+\beta) \, , \, n- \frac{\beta}{p-1} \biggr\}\,.
\end{equation}
In addition, let $\tau\in (0,1)$.
Then there is a constant $C=C(\beta,\tau,n,c,s,p,q)>0$ such that inequality~\eqref{e.weighted}
holds for all $u\in \bigcup_{1\le r<\infty} L^r(D)$.
\end{thm}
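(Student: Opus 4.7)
The overall strategy is to combine the pointwise potential bound of Theorem~\ref{t.repr} with a two-weight norm inequality for the Riesz potential $\mathcal{I}_s$, applied with weights that are powers of the distance-to-boundary function. The first (and essential) step is already done in Section~\ref{s.john}; Section~\ref{s.hardy-sobolev} then reduces to a two-weight Riesz potential estimate with boundary-distance weights.

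Concretely, I would fix $u\in L^r(D)$ for some $1\le r<\infty$ and apply Theorem~\ref{t.repr} to obtain the pointwise bound $\lvert u(x)\rvert\le C\,\mathcal{I}_s(\chi_D g)(x)$ at every Lebesgue point $x\in D$ of $u$, hence almost everywhere in $D$. By the very definition~\eqref{e.finite} of $g$,
\[
\int_D g(y)^p\,\delta_{\partial D}^\beta(y)\,dy = \int_D \int_{B(y,\tau\delta_{\partial D}(y))} \frac{\lvert u(y)-u(z)\rvert^p}{\lvert y-z\rvert^{n+sp}}\,dz\,\delta_{\partial D}^\beta(y)\,dy,
\]
which is precisely the right-hand side of~\eqref{e.weighted}. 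Raising the pointwise bound to the $q$-th power, integrating against $\delta_{\partial D}^{(q/p)(n-sp+\beta)-n}$ over $D$, and taking the $p/q$-th root reduces the proof of~\eqref{e.weighted} to the two-weight Riesz potential inequality
\begin{equation*}
\biggl(\int_D \mathcal{I}_s(\chi_D g)(x)^q\,\delta_{\partial D}^{(q/p)(n-sp+\beta)-n}(x)\,dx\biggr)^{1/q}
\le C\biggl(\int_D g(y)^p\,\delta_{\partial D}^\beta(y)\,dy\biggr)^{1/p}.
\end{equation*}

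To establish this inequality I would invoke the metric-space two-weight Riesz potential result~\cite[Theorem~5.3]{Dyda0}, applied with the distinguished closed set $\partial D$ and the distance-power weights $d\mu=\delta_{\partial D}^\beta\,dx$ and $d\nu=\delta_{\partial D}^{(q/p)(n-sp+\beta)-n}\,dx$. The dimensional assumption~\eqref{e.dim_assumption} is designed exactly to verify the hypotheses of that theorem: the bound $\dima(\partial D)<(q/p)(n-sp+\beta)$ yields the required upper Frostman-type estimate for the $\nu$-measure of balls centred at boundary points, while the dual bound $\dima(\partial D)<n-\beta/(p-1)$ supplies the analogous control for the weight $\delta_{\partial D}^{-\beta/(p-1)}$ that arises upon dualizing the $L^p(\mu)$-norm. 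Together with the exponent restriction $p\le q\le np/(n-sp)$, these are the classical conditions securing the $L^p(\mu)\to L^q(\nu)$ boundedness of $\mathcal{I}_s$.

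The main obstacle, and the novelty compared with~\cite[Theorem~5.3]{Dyda0}, is that both integrals are restricted to the domain $D$ rather than taken over all of $\R^n$. This improvement is made feasible precisely by Theorem~\ref{t.repr}: the Riesz potential is applied only to $\chi_D g$, which vanishes outside $D$, and the pointwise bound on $u$ is needed only at points of $D$. Consequently, the Whitney/covering decompositions underpinning the proof in~\cite{Dyda0} can be performed using balls centred in $D$, and the Assouad-dimensional bound on $\partial D$ still suffices to control the resulting testing quantities. Tracking the dependence on the John constant $c$ through Theorem~\ref{t.repr} then yields the claimed constant $C=C(\beta,\tau,n,c,s,p,q)$.
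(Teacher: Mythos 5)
Your proposal is correct and follows essentially the same route as the paper: the pointwise bound from Theorem~\ref{t.repr} followed by a two-weight norm inequality for $\mathcal{I}_{s}$ with distance-power weights whose hypotheses are exactly the dimensional condition~\eqref{e.dim_assumption} (in the paper this is \cite[Theorem~4.1]{Dyda0}; the result you cite, \cite[Theorem~5.3]{Dyda0}, is the full-space Hardy--Sobolev inequality that Theorem~\ref{t.hardy} improves). The one place you overcomplicate the argument is the final step: there is no need to rework the Whitney/covering decompositions of \cite{Dyda0} inside $D$, because the left-hand integral over $D$ is trivially dominated by the integral over all of $\R^n$ (the weight is defined a.e.\ since $\dima(\partial D)<n$ forces $\lvert \partial D\rvert=0$) and the right-hand side already involves only $\chi_D g$, so the full-space two-weight inequality applies verbatim; the genuine novelty is entirely in Theorem~\ref{t.repr}, as you correctly note.
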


\begin{proof}
Fix a function $u\in \bigcup_{1\le r<\infty} L^r(D)$
and write, as in~\eqref{e.finite}, for every $y\in D$, 
\begin{equation*}
g(y)=\biggl(\int_{B(y,\tau\delta_{\partial D}(y))}\frac{\vert u(y)-u(z)\vert^p}{\vert y-z\vert ^{n+s p}}\,dz\biggr)^{1/p}\,.
\end{equation*}
Also denote, for every $x\in \R^n\setminus \partial D$,
\[
w(x) = \delta_{\partial D}^{(q/p)(n-s p+\beta)-n}(x)\,. 
\] 
We remark that $w$ is defined and positive almost everywhere in $\R^n$. Indeed, notice first that $\dima(\partial D)<n$ by the assumption \eqref{e.dim_assumption}, and thus $\lvert \partial D\rvert=0$; we refer to \cite[Remark 3.2]{Dyda0}.

By Theorem \ref{t.repr}, there is a constant $C=C(\tau,n,c,s,p)>0$ such that
inequality
\begin{align*}
\vert u(x)\vert^q w(x)&\le C\,\mathcal{I}_{s}(\chi_D g)(x)^q w(x)
\end{align*}
holds for every Lebesgue point $x\in D$ of $u$.  In particular, since almost every point $x\in D$ is a Lebesgue point of $u$, we obtain that
\begin{align*}
\bigg(\int_{D}\vert u(x)\vert^q w(x)\,dx\biggr)^{p/q} &\le  C\bigg(\int_{D}\mathcal{I}_{s}(\chi_D g)(x)^q w(x)\,dx\biggr)^{p/q}\\
&\le C\bigg(\int_{\R^n}\mathcal{I}_{s}(\chi_D g)(x)^q w(x)\,dx\biggr)^{p/q}\,.
\end{align*}
Next we apply~\cite[Theorem 4.1]{Dyda0}, 
which yields two weight inequalities for the Riesz potentials, where the weights are powers of the distance function $\delta_{\partial D}$. In~\cite{Dyda0} the result is formulated in a general metric space,
but it is straightforward to see that in $\R^n$ the dimensional condition in \cite[Theorem 4.1]{Dyda0} coincides with~\eqref{e.dim_assumption}.
We remark that the proof of \cite[Theorem 4.1]{Dyda0}
is based on the Muckenhoupt $A_p$-properties of the powers of $\delta_{\partial D}$ and
general $A_p$-weighted inequalities; the Euclidean space versions of the latter are 
originally due to P{\'e}rez~\cite{Perez1990}.
From~\cite[Theorem 4.1]{Dyda0} it follows that  
\begin{align*}
\bigg(\int_{\R^n}\mathcal{I}_{s}(\chi_D g)(x)^q w(x)\,dx\bigg)^{p/q}
&\le C\int_{\R^n} \chi_D(y) g(y)^p\,\delta_{\partial D}^{\beta}(y)\,dy
\\&=C\int_D \int_{B(y,\tau\delta_{\partial D}(y))}\frac{\vert u(y)-u(z)\vert^p}{\vert y-z\vert ^{n+s p}}\,dz\,\delta_{\partial D}^{\beta}(y)\,dy\,.
\end{align*}
Here the constant $C>0$ is independent of $u$ and $g$, and so the desired inequality \eqref{e.weighted} follows by combining the  two estimates above.
\end{proof}

\begin{rem}\label{r.admissible}
In the case $D=\R^n_+$ we have $\dima(\partial\R^n_+)=n-1$. 
Then the bounds in~\eqref{e.dim_assumption}
are equivalent to
\[
\frac p q (n-1) - n +sp <\beta <p-1.
\]
From this we see that the lower bound for $\beta$ is strictly decreasing in terms of $q$.
For $q=p$ the lower bound is $sp-1$ and for $q=np/(n-sp)$ the lower bound is
$sp/n-1$. In particular, the value $\beta=sp-1$, which will be used in the following
Section~\ref{sec.application} while proving our main inequality~\eqref{eq:main_intro}, 
is allowed in~\eqref{e.weighted} for $D=\R^n_+$ whenever 
$0<s<1$ and $1< p<q\le np/(n-sp) < \infty$. 

Let us however point out that we do not know if the above bounds for $\beta$ are
optimal in $\R^n_+$ or in more general unbounded $c$-John domains; in particular, the necessity
of the upper bound $\beta<p-1$ is questionable. 
\end{rem}

\begin{rem}
Assume that $D\subset\R^n$ is a bounded $c_1$-John domain such that~\eqref{e.dim_assumption} holds, 
where $s,p,q,\beta$ are as in Theorem~\ref{t.hardy}, and let $u\in L^1_{\textup{loc}}(D)$. 
For each $y\in D$, we write
\[
g(y)=\biggl(\int_{B(y,\tau\delta_{\partial D}(y))}\frac{\vert u(y)-u(z)\vert^p}{\vert y-z\vert ^{n+s p}}\,dz\biggr)^{1/p}\,.
\]
Then it follows from Proposition~\ref{t.bounded_repr} that
\begin{align*}
\lvert u(x)-u_B\rvert^q w(x)&\le C\,\mathcal{I}_{s}(\chi_D g)(x)^q w(x)
\end{align*}
for every Lebesgue point $x\in D$ of $u$, where $B$ is as in Proposition~\ref{t.bounded_repr}
and $w$ is as in the proof of Theorem~\ref{t.hardy}. We can then repeat the rest of the proof of
Theorem~\ref{t.hardy}, and conclude that
\begin{equation*}\label{e.weighted_bdd}
\begin{split}
\biggl(\int_D \lvert u(x)-u_B\rvert^q
\delta_{\partial D}^{(q/p)(n-s p+\beta)-n}(x)\,dx\biggr)^{p/q}
\le C \int_D \int_{B(x,\tau \delta_{\partial D}(x))}  \frac{\lvert u(x)-u(y)\rvert^p}{\lvert x-y\rvert^{n+s p}}\,dy\,
\delta_{\partial D}^\beta(x)\,dx\,.
\end{split}
\end{equation*}
\end{rem}

\begin{rem}
We note that both Theorem~\ref{t.repr} and Theorem~\ref{t.hardy} hold for every $u\in L^1_{\mathrm{loc}}(D)$ sastisfying $u_{B_i} \to 0$ whenever $B_i\subset D$ is a~sequence of balls with $\diam(B_i)\to\infty$.
For example, it is enough that $u\in L^1_{\mathrm{loc}}(D)$ and $u(x)\to 0$ as $|x|\to\infty$.
\end{rem}
  
\section{Fractional Hardy--Sobolev--Maz'ya inequality on half spaces}\label{sec.application}

We are now prepared to prove the fractional Hardy--Sobolev--Maz'ya inequality~\eqref{eq:main_intro}
in the half space $\R^n_+$. As we will see, this inequality, which is reformulated in Theorem~\ref{thm:hsm} below, 
is a rather immediate consequence of Theorem~\ref{t.hardy}
and the fractional Hardy inequality with the best constant $\mathcal D$
and a~remainder term, \cite[Theorem~1.2]{FrankSeiringer}.
This constant $\mathcal D$ has the explicit form 
\begin{equation}\label{eq:hardyconst}
\mathcal D = \mathcal D(n,p,s) = 2 \pi^{\frac{n-1}2} \frac{\Gamma(\frac{1+sp}2)}{\Gamma(\frac{n+sp}2)}
\int_0^1 \left| 1 - r^{(sp-1)/p} \right|^p \frac{dr}{(1-r)^{1+sp}}\,,
\end{equation}
where $\Gamma$ denotes the usual gamma function.
In particular, $\mathcal D$ is the largest number for which the  left-hand side of the
Hardy--Sobolev--Maz'ya inequality \eqref{eq:main_intro}
is non-negative 
for every $u\in C^\infty_0(\R^n_+)$; 
see~\cite[Theorem 1.1]{FrankSeiringer}. 
We also refer to \cite{KBBD-bc, FLS, FrSe1, MR2659764} for more results concerning fractional
Hardy inequalities with best constants.

We actually prove inequality~\eqref{eq:main_intro} in Theorem~\ref{thm:hsm} for
functions belonging to space $\mathcal{W}_0^{s,p}(\R^n_+)$, which is defined as follows. 
When $1\le p < \infty$ and $0<s<1$, 
the fractional Sobolev seminorm $\lvert u\rvert_{W^{s,p}(\R^n_+)}$ 
of a measurable function $u\colon \R^n_+\to \R$ is
\[
\lvert u \rvert_{W^{s,p}(\R^n_+)} = \biggl(\iint_{\R^n_+\times \R^n_+}\frac{\lvert u(x)-u(y)\rvert^p}{\lvert x-y\rvert^{n+s p}}\,
dy\,dx\,\biggr)^{1/p}\,.
\]
The space $\mathcal{W}_0^{s,p}(\R^n_+)$ 
is then the completion of $C_0^\infty(\R^n_+)$ with respect to the seminorm   
$\lvert\,\cdot\,\rvert_{W^{s,p}(\R^n_+)}$.

\begin{rem}\label{r.indentification}
Assume that $1\le p < n/s$, where $n\ge 2$, $1\le p<\infty$ and $0<s<1$. 
Then the space $\mathcal{W}_0^{s,p}(\R^n_+)$ 
can be identified as a subspace of $L^{np/(n-sp)}(\R^n_+)$ using the following reasoning.
First, by the Sobolev Embedding Theorem \cite[Theorem 5.2]{H-SV2}, there exists a constant $C>0$ such that 
$\lVert u\rVert_{L^{np/(n-sp)}(\R^n_+)} \le C \lvert u\rvert_{W^{s,p}(\R^n_+)}$ for all $u\in C^\infty_0(\R^n_+)$.
Therefore, if $(u_j)_{j\in\N}\subset C^\infty_0(\R^n_+)$ is a Cauchy sequence with respect to the seminorm $\lvert\,\cdot\,\rvert_{W^{s,p}(\R^n_+)}$, then there exists
$u\in L^{np/(n-sp)}(\R^n_+)$ such that $\lim_{j\to \infty}\lVert u_j-u\rVert_{L^{np/(n-sp)}(\R^n_+)}=0$.
A straightforward adaptation of \cite[Proposition 7]{DydaKassman} then shows that 
$\lim_{j\to\infty} \lvert u-u_j\rvert_{W^{s,p}(\R^n_+)}=0$.  
\end{rem}

\begin{thm}\label{thm:hsm}
Let $n\ge 2$ and assume that
$2\leq p,q<\infty$ and $0<s<1$ are such that $sp<n$ and $p < q \leq np/(n-sp)$, and
write $b=n(1/q-1/p)+s$. 
Then there is a constant $\sigma=\sigma(n,p,q,s)>0$ such that
\begin{align*}
\iint_{\R^n_+\times\R^n_+} \frac{\lvert u(x)-u(y)\rvert^p}{|x-y\rvert^{n+sp} }\,dy\,dx
 - \mathcal D\int_{\R^n_+} \lvert u(x)\rvert^p x_n^{-sp} \,dx 
\geq \sigma \biggl( \int_{\R^n_+} \lvert u(x)\rvert^q x_n^{-bq} \,dx \biggr)^{p/q}
\end{align*}
for all $u\in \mathcal{W}_0^{s,p}(\R^n_+)$, 
where the constant $\mathcal D=\mathcal D(n,p,s)$ is as in \eqref{eq:hardyconst}.
\end{thm}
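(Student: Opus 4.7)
The plan is to combine the sharp fractional Hardy inequality with remainder term from \cite[Theorem~1.2]{FrankSeiringer} with the weighted Hardy--Sobolev inequality of Theorem~\ref{t.hardy}, applied to a suitably reweighted version of $u$. I would first treat the case $u\in C^\infty_0(\R^n_+)$ and then extend to $\mathcal{W}_0^{s,p}(\R^n_+)$ by density.

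Given $u\in C^\infty_0(\R^n_+)$, I introduce the function $v(x)=x_n^{(1-sp)/p}u(x)$, which satisfies $|u(x)|^p x_n^{-sp}=|v(x)|^p x_n^{-1}$ and is again smooth and compactly supported in $\R^n_+$. The Frank--Seiringer remainder inequality then yields a lower bound of the form
\[
\iint_{\R^n_+\times\R^n_+} \frac{|u(x)-u(y)|^p}{|x-y|^{n+sp}}\,dy\,dx - \mathcal D\int_{\R^n_+} |u|^p x_n^{-sp}\,dx \ge c_1 \iint_{\R^n_+\times\R^n_+}\frac{|v(x)-v(y)|^p}{|x-y|^{n+sp}}(x_n y_n)^{(sp-1)/2}\,dy\,dx,
\]
with $c_1=c_1(n,p,s)>0$. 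I would then restrict the right-hand double integration to pairs $(x,y)$ with $y\in B(x,\tau x_n)$ for a fixed $\tau\in(0,1)$, say $\tau=1/2$; on this set $y_n$ is comparable to $x_n$, so $(x_n y_n)^{(sp-1)/2}\ge c(\tau)x_n^{sp-1}$ and the right-hand side is further bounded below by
\[
c_2\int_{\R^n_+}\int_{B(x,\tau x_n)}\frac{|v(x)-v(y)|^p}{|x-y|^{n+sp}}\,dy\,x_n^{sp-1}\,dx.
\]

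Next I apply Theorem~\ref{t.hardy} to $v$ on the unbounded John domain $D=\R^n_+$ with the same $\tau$ and with weight exponent $\beta=sp-1$. Since $\dima(\partial\R^n_+)=n-1$, Remark~\ref{r.admissible} guarantees that~\eqref{e.dim_assumption} is satisfied for every $1<p<q\le np/(n-sp)$, which is exactly the range appearing in the statement, and since $v\in C^\infty_0(\R^n_+)$ it belongs to $L^r(\R^n_+)$ for every $r\in[1,\infty)$. Theorem~\ref{t.hardy} therefore bounds the last display from below by $C\bigl(\int_{\R^n_+}|v|^q x_n^{(q/p)(n-1)-n}\,dx\bigr)^{p/q}$. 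A direct computation shows $|v|^q x_n^{(q/p)(n-1)-n}=|u|^q x_n^{(q/p)(n-sp)-n}=|u|^q x_n^{-bq}$, which establishes~\eqref{eq:main_intro} for every $u\in C^\infty_0(\R^n_+)$.

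For the density step, I take a sequence $(u_j)\subset C^\infty_0(\R^n_+)$ converging to $u$ in the seminorm $|\cdot|_{W^{s,p}(\R^n_+)}$. When $sp\ne 1$ the standard fractional Hardy inequality applied to $u_j-u_k$ shows that $(u_j)$ is Cauchy in $L^p(\R^n_+;x_n^{-sp}\,dx)$, so the full left-hand side of~\eqref{eq:main_intro} converges to the correct limit; when $sp=1$ the Hardy term vanishes and only the seminorm contributes. By Remark~\ref{r.indentification}, $u_j\to u$ also in $L^{np/(n-sp)}(\R^n_+)$, and hence a.e.\ along a subsequence, so Fatou's lemma gives $\int_{\R^n_+}|u|^q x_n^{-bq}\,dx\le\liminf_j\int_{\R^n_+}|u_j|^q x_n^{-bq}\,dx$, completing the extension. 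The main obstacle I anticipate is identifying the Frank--Seiringer remainder term with a weighted Gagliardo-type seminorm of $v$ carrying precisely the factor $(x_n y_n)^{(sp-1)/2}$; once that representation is in hand, the localization to $B(x,\tau x_n)$ and the application of Theorem~\ref{t.hardy} are essentially bookkeeping of exponents.
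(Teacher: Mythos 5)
Your proposal is correct and follows essentially the same route as the paper: the Frank--Seiringer ground state representation for $v(x)=x_n^{-(sp-1)/p}u(x)$, localization of the weighted Gagliardo seminorm to $B(x,\tau x_n)$ where $y_n\asymp x_n$, Theorem~\ref{t.hardy} with $\beta=sp-1$ via Remark~\ref{r.admissible}, and the same exponent bookkeeping, followed by the same density/Fatou argument. The only differences are cosmetic (you move the comparison $(x_ny_n)^{(sp-1)/2}\gtrsim x_n^{sp-1}$ in the opposite but equivalent direction to the paper's use of~\eqref{e.shift}).
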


\begin{proof}
The proof follows the ideas presented in \cite[Section 2]{MR2910984}, but instead of using the 
Sobolev inequality as in~\cite{MR2910984}, we will use the more general inequality \eqref{e.weighted}.

We consider first the case $u\in C^\infty_0(\R^n_+)$.
Our starting point is the inequality
\begin{equation}
\label{eq:gsr}
\iint_{\R^n_+\times\R^n_+} \frac{\lvert u(x)-u(y)\rvert^p}{\lvert x-y\rvert^{n+sp} }\,dy\,dx
 - \mathcal D \int_{\R^n_+} \lvert u(x)\rvert^p x_n^{-sp} \,dx 
\geq c_p J[v] \,,
\end{equation}
where $c_p>0$ is an explicit constant (for $p=2$, \eqref{eq:gsr} is an identity with $c_2=1$),
\[
J[v] := \iint_{\R^n_+\times\R^n_+} \frac{\lvert v(x)-v(y)\rvert^p}{\lvert x-y\rvert^{n+sp} } (x_n y_n)^{(sp-1)/2}\,dy\,dx \,,
\]
and $v(x):=x_n^{-(sp-1)/p} u(x)$ for each $x\in \R^n_+$. Notice that $v\in C^\infty_0(\R^n_+)$
and $x_n=\delta_{\partial\R^n_+}(x)$ if $x\in \R^n_+$. Inequality \eqref{eq:gsr} was derived in \cite[Theorem~1.2]{FrankSeiringer}, using the `ground state representation' method from \cite{FrSe1}.

We apply Theorem~\ref{t.hardy} for $D=\R^n_+$, $\beta=sp-1$ and a~fixed $0<\tau<1$;  
recall here Remark~\ref{r.admissible}. Then we use estimate~\eqref{e.shift} with $\beta_1=\beta_2=\beta/2$, and obtain that
\begin{align*}
\biggl( \int_{\R^n_+} \lvert v(x)\rvert^q
    x_n^{(q/p)(n-sp+\beta)-n}(x)\,dx \biggr)^{p/q}
& \le C \int_{\R^n_+} \int_{B(x,\tau x_n)} \frac{\lvert v(x)-v(y)\rvert^p}{\lvert x-y\rvert^{n+sp}}\,dy\,
x_n^\beta \,dx \\
& \le
 C \int_{\R^n_+} \int_{B(x,\tau x_n)} \frac{\lvert v(x)-v(y)\rvert^p}{\lvert x-y\rvert^{n+sp}} y_n^{\beta/2} \,dy\,
x_n^{\beta/2} \,dx
\\& \le C J[v]. 
\end{align*}
Combining the above inequality with \eqref{eq:gsr} and the fact that
\[ 
\lvert v(x)\rvert^q x_n^{(q/p)(n-sp+\beta)-n}
= \lvert u(x)\rvert^q x_n^{(q/p)(n-sp)-n}
= \lvert u(x)\rvert^q x_n^{-bq}
\] 
proves the claim
for functions $u\in C^\infty_0(\R^n_+)$.

In the general case $u\in \mathcal{W}_0^{s,p}(\R^n_+)\subset L^{np/(n-sp)}(\R^n_+)$,
it suffices to consider a sequence  $(u_j)_{j\in\N}$ of $C^\infty_0(\R^n_+)$ functions, which
is Cauchy with respect to the seminorm $\lvert\,\cdot\,\rvert_{W^{s,p}(\R^n_+)}$ and which converges to $u$ in $L^{np/(n-sp)}(\R^n_+)$.  
Then $\lim_{j\to\infty}\lvert u-u_j\rvert_{W^{s,p}(\R^n_+)}=0$; cf.\ Remark \ref{r.indentification}.
By taking a subsequence, if necessary, we may also assume
that $\lim_{j\to \infty} u_j(x)=u(x)$ for almost every $x\in \R^n_+$.
By Fatou's lemma, and the already proved
inequality \eqref{eq:main_intro} for $C^\infty_0(\R^n_+)$ functions,
\begin{align*}
&\sigma \, \biggl( \int_{\R^n_+}\lvert u(x)\lvert^q x_n^{-bq} \,dx \biggr)^{p/q}\le \liminf_{j\to \infty}
\sigma \, \biggl( \int_{\R^n_+} \lvert u_j(x)\lvert^q x_n^{-bq} \,dx \biggr)^{p/q}\\
&\qquad \qquad \le \liminf_{j\to\infty} \biggl(\iint_{\R^n_+\times\R^n_+} \frac{\lvert u_j(x)-u_j(y)\lvert^p}{\lvert x-y\lvert^{n+sp} }\,dy\,dx
 - \mathcal D \int_{\R^n_+} \lvert u_j(x)\lvert^p x_n^{-sp}\,dx\biggr)\,.
\end{align*}
When $\mathcal D =\mathcal D(n,p,s)\not=0$, we have $sp\not=1$, and therefore, by the fractional Hardy inequality in \cite[Theorem 1.1]{FrankSeiringer},
\[
\biggl(\mathcal D\int_{\R^n_+}\lvert u_j(x)\lvert^p x_n^{-sp}\,dx\biggr)^{1/p}+ \biggl(\mathcal D\int_{\R^n_+}\lvert u(x)\lvert^p x_n^{-sp}\,dx\biggr)^{1/p}
\le \lvert u_j\rvert_{W^{s,p}(\R^n_+)} + \lvert u\rvert_{W^{s,p}(\R^n_+)} <\infty\,,
\]
and furthermore
\begin{align*}
\biggl\lvert \biggl(\mathcal D&\int_{\R^n_+}\lvert u_j(x)\lvert^p x_n^{-sp}\,dx\biggr)^{1/p}- \biggl(\mathcal D\int_{\R^n_+}\lvert u(x)\lvert^p x_n^{-sp}\,dx\biggr)^{1/p}\biggr\rvert
\\&\le  \biggl(\mathcal D\int_{\R^n_+}\lvert u(x)-u_j(x)\lvert^p x_n^{-sp}\,dx\biggr)^{1/p}
\le  \lvert u-u_j\rvert_{W^{s,p}(\R^n_+)}\xrightarrow{j\to\infty} 0\,.
\end{align*}
Since $\lim_{j\to \infty}\lvert u-u_j\rvert_{W^{s,p}(\R^n_+)}=0$, we  find that 
$\lim_{j\to \infty} \lvert u_j\rvert_{W^{s,p}(\R^n_+)}=\lvert u\rvert_{W^{s,p}(\R^n_+)}$. Hence,
\begin{align*}
&\liminf_{j\to\infty} \biggl(\iint_{\R^n_+\times\R^n_+} \frac{\lvert u_j(x)-u_j(y)\lvert^p}{\lvert x-y\lvert^{n+sp} }\,dy\,dx
 - \mathcal D\int_{\R^n_+} \lvert u_j(x)\lvert^p x_n^{-sp}\,dx\biggr)\\
 &\qquad\qquad\qquad\qquad=\iint_{\R^n_+\times\R^n_+} \frac{\lvert u(x)-u(y)\lvert^p}{\lvert x-y\lvert^{n+sp} }\,dy\,dx
 - \mathcal D \int_{\R^n_+} \lvert u(x)\rvert^p x_n^{-sp}\,dx\,.
\end{align*}
The claim follows from the above estimates. 
\end{proof}


\begin{thebibliography}{10}

\bibitem{KBBD-bc}
K.~Bogdan and B.~Dyda.
\newblock The best constant in a fractional {H}ardy inequality.
\newblock {\em Math. Nachr.}, 284(5-6):629--638, 2011.

\bibitem{MR2910984}
B.~Dyda and R.~L. Frank.
\newblock Fractional {H}ardy-{S}obolev-{M}az'ya inequality for domains.
\newblock {\em Studia Math.}, 208(2):151--166, 2012.

\bibitem{Dyda0}
B.~Dyda, L.~Ihnatsyeva, J.~Lehrb\"ack, H.~Tuominen, and A.~V. V\"ah\"akangas.
\newblock Muckenhoupt {$A_p$}-properties of distance functions and applications
  to {H}ardy--{S}obolev -type inequalities.
\newblock arXiv:1705.01360 [math.CA], 2017.

\bibitem{DydaKassman}
B.~Dyda and M.~Kassmann.
\newblock {F}unction spaces and extension results for nonlocal {D}irichlet
  problems.
\newblock arXiv:1612.01628 [math.AP], 2016.

\bibitem{Dyda1}
B.~Dyda and A.~V. V\"ah\"akangas.
\newblock A framework for fractional {H}ardy inequalities.
\newblock {\em Ann. Acad. Sci. Fenn. Math.}, 39(2):675--689, 2014.

\bibitem{FMT}
S.~Filippas, L.~Moschini, and A.~Tertikas.
\newblock Sharp trace {H}ardy-{S}obolev-{M}az'ya inequalities and the
  fractional {L}aplacian.
\newblock {\em Arch. Ration. Mech. Anal.}, 208(1):109--161, 2013.

\bibitem{FLS}
R.~L. Frank, E.~H. Lieb, and R.~Seiringer.
\newblock Hardy-{L}ieb-{T}hirring inequalities for fractional {S}chr\"odinger
  operators.
\newblock {\em J. Amer. Math. Soc.}, 21(4):925--950, 2008.

\bibitem{FrSe1}
R.~L. Frank and R.~Seiringer.
\newblock Non-linear ground state representations and sharp {H}ardy
  inequalities.
\newblock {\em J. Funct. Anal.}, 255(12):3407--3430, 2008.

\bibitem{FrankSeiringer}
R.~L. Frank and R.~Seiringer.
\newblock Sharp fractional {H}ardy inequalities in half-spaces.
\newblock In {\em Around the research of {V}ladimir {M}az'ya. {I}}, volume~11
  of {\em Int. Math. Ser. (N. Y.)}, pages 161--167. Springer, New York, 2010.

\bibitem{H-SV}
R.~Hurri-Syrj\"anen and A.~V. V\"ah\"akangas.
\newblock On fractional {P}oincar\'e inequalities.
\newblock {\em J. Anal. Math.}, 120:85--104, 2013.

\bibitem{H-SV2}
R.~Hurri-Syrj\"anen and A.~V. V\"ah\"akangas.
\newblock Fractional {S}obolev-{P}oincar\'e and fractional {H}ardy inequalities
  in unbounded {J}ohn domains.
\newblock {\em Mathematika}, 61(2):385--401, 2015.

\bibitem{MR2659764}
M.~Loss and C.~Sloane.
\newblock Hardy inequalities for fractional integrals on general domains.
\newblock {\em J. Funct. Anal.}, 259(6):1369--1379, 2010.

\bibitem{Luukkainen}
J.~Luukkainen.
\newblock Assouad dimension: antifractal metrization, porous sets, and
  homogeneous measures.
\newblock {\em J. Korean Math. Soc.}, 35(1):23--76, 1998.


\bibitem{MusinaNazarov}
R.~Musina and A.~I.~Nazarov.
\newblock {F}ractional {H}ardy--{S}obolev inequalities on half spaces.
\newblock arXiv:1707.02710 [math.AP], 2017.

\bibitem{Perez1990}
C.~P{\'e}rez.
\newblock Two weighted norm inequalities for {R}iesz potentials and uniform
 {$L\sp p$}-weighted {S}obolev inequalities.
\newblock {\em Indiana Univ. Math. J.}, 39(1):31--44, 1990.


\bibitem{Sloane}
C.~A. Sloane.
\newblock A fractional {H}ardy-{S}obolev-{M}az'ya inequality on the upper
  halfspace.
\newblock {\em Proc. Amer. Math. Soc.}, 139(11):4003--4016, 2011.

\bibitem{MR1246886}
J.~V\"ais\"al\"a.
\newblock Exhaustions of {J}ohn domains.
\newblock {\em Ann. Acad. Sci. Fenn. Ser. A I Math.}, 19(1):47--57, 1994.

\end{thebibliography}

%

\end{document}